\newtheorem{theorem}{Theorem}[section]
\newtheorem{corollary}[theorem]{Corollary}
\newtheorem{definition}[theorem]{Definition}
\newtheorem{proposition}[theorem]{Proposition}
\def\Mon{\operatorname{Mon}}
\def\M(G,W){\operatorname{M(G,W)}}
\def\F(W){\operatorname{F(W)}}
\def\Hol{\operatorname{Hol}}
\def\Ob{\operatorname{Ob}}
\def\TGrpGpd{\mathsf{STopGrpGpd}}
\def\STopGrpGpd{\mathsf{StarTopGrpGpd}}
\def\TopGrpGpd{\mathsf{TopGrpGpd}}
\begin{document}
\title{\large\bf Topological aspect of  monodromy groupoid for  a group-groupoid}
\author[1]{Osman MUCUK \thanks{mucuk@erciyes.edu.tr}}
\author[2]{Serap DEM\.{I}R \thanks{srpdmr89@gmail.com}}
\affil[1]{Department of Mathematics, Erciyes University Kayseri 38039, Turkey}
\affil[2]{Department of Mathematics, Erciyes University Kayseri 38039, Turkey}
\date{\vspace{-5ex}}
\maketitle

\noindent{\bf Key Words:} Monodromy groupoid, group-groupoid, holonomy groupoid, universal covering
\\ {\bf Classification:} Primary  20L05, 57M10, ; Secondary  22AXX, 22A22



\begin{abstract}
In this paper we develop  star topological and  topological group-groupoid structures  of monodromy groupoid and prove that the monodromy groupoid of a topological group-groupoid is also a topological group-groupoid.
\end{abstract}


\section{Introduction}

As enunciated by Chevalley in \cite[Theorem 2, Chapter 2]{Ch}, the general idea of the  monodromy principle  is that of extending a local morphism $f$  on a topological structure $G$, or extending  a restriction of $f$, not to $G$ itself but to some simply connected cover of $G$. A form of this for topological groups was given in \cite[Theorem 3]{Ch}, and developed by Douady and Lazard in \cite{Do-La} for Lie groups, generalized to topological groupoid case in \cite{Mu1} and \cite{Br-Mu2}.

The notion of monodromy groupoid was indicated by J. Pradines in \cite{Pr} as part of his grand scheme announced in \cite{Pr,Pr2,Pr3,Pr4} to generalize the standard construction of a simply connected Lie group from a Lie algebra to a corresponding construction of a Lie groupoid from a Lie algebroid (see also \cite{Ma,Ma2,Ph}).

 One  construction of the monodromy groupoid for a topological groupoid $G$ and an open subset $W$ including the identities is given via free groupoid concept and denoted by $\M(G,W)$ as a generalization of the construction  in \cite{Do-La}.

Another construction of the   monodromy groupoid  for a topological groupoid $G$ in which each star $G_x$ has a universal cover is directly given in  Mackenzie~\cite[p.67-70]{Ma} as a disjoint union of the universal covers  of the stars $G_x$'s and denoted by $\Mon(G)$.

 These  two monodromy groupoids $\M(G,W)$ and $\Mon(G)$ are identified as star Lie  groupoids in \cite[Theorem 4.2]{Br-Mu2} using Theorem \ref{TheoremHolonomy}  which is originally \cite[Theorem 2.1]{Ao-Br} to get an appropriate topology.

In particularly if $G$ is a connected topological group which has a universal cover, then the monodromy groupoid $\Mon( G)$   is the universal covering group, while if  $G$ is the topological groupoid  $X\times X$,  for   a semi-locally simply connected topological space $X$, then the monodromy groupoid $\Mon(G)$ is the fundamental groupoid  $\pi X $. Hence the monodromy groupoid generalizes both the concepts of universal covering group and the fundamental groupoid. For further discussion on monodromy and holonomy groupoids see \cite{BIM}.

The notion of monodromy groupoid for  topological  group-groupoid was recently introduced and investigated in  \cite{Mu-Be-Tu-Na}; and then it has been generalized to the internal groupoid case in \cite{Mu-Ak}. Motivated by the referee's comments of latter paper, in this paper, we aim to develop the topological aspect of  the  monodromy groupoid $\Mon(G)$ as a group-groupoid for a topological group-groupoid $G$.

The organization of the paper is as follows: In Section 1 we give a preliminary of groupoid, group-groupoid,  topological groupoid and the constructions of  monodromy groupoid. In section 2  star topological group-groupoid structure of monodromy groupoid with some results are given.   Section 3 is devoted to develop topological aspect of the  monodromy groupoid $\Mon(G)$ as group-groupoid together with a strong monodromy principle for topological group-groupoids.

\section{Preliminaries on monodromy groupoid}
A {\em groupoid}  is a small category in which each morphism is an isomorphism (see for example \cite{Br1} and \cite{Ma}). So a groupoid $G$ has a set $G$ of morphisms and  a set $\Ob(G)$ of {\em objects} together with {\em source} and {\em target} point maps $\alpha, \beta\colon G\rightarrow \Ob(G)$  and {\em object inclusion} map  $\epsilon \colon \Ob(G) \rightarrow G$  such that $\alpha\epsilon=\beta\epsilon=1_{\Ob(G)}$. There exists a partial composition defined by $ G_\beta\times_\alpha G\rightarrow G, (g,h)\mapsto g\circ h$, where $G_\beta\times_\alpha G$ is the pullback of $\beta$ and $\alpha$.  Here if $g,h\in G$ and $\beta(g)=\alpha(h)$, then the {\em composite}  $g\circ h$ exists such that $\alpha(g\circ h)=\alpha(g)$ and $\beta(g\circ h)=\beta(h)$. Further, this partial composition is associative, for $x\in \Ob(G)$ the morphism $\epsilon (x)$ acts as the identity and it is denoted by $1_x$, and each element $g$ has an inverse $g^{-1}$ such that $\alpha(g^{-1})=\beta(g)$,  $\beta(g^{-1})=\alpha(g)$,   $g\circ g^{-1}=\epsilon (\alpha(g))$, $g^{-1}\circ g=\epsilon (\beta((g))$. The map $G\rightarrow G$, $g\mapsto g^{-1}$ is called the {\em inversion}. In  a groupoid $G$, the source and target points, the object inclusion, the inversion maps and the partial composition  are  called {\em structural maps}. An example of a groupoid is  fundamental groupoid of a topological space $X$, where the objects are points of $X$ and  morphisms are homotopy classes of the paths relative to the end points. A group is a groupoid with one object.

In a groupoid $G$  each object $x$ is identified with unique identity $\epsilon(x)=1_x$ and hence we sometimes  write $\Ob(G)$ for the set of identities.  For $x,y\in \Ob(G)$ we write $G(x,y)$ for  $\alpha^{-1}(x)\cap \beta ^{-1}(y)$. The { difference  map } $\delta \colon G\times_{\alpha} G\rightarrow G$ is given by  $\delta (g,h) ={g}^{-1} \circ h$, and is defined on the double pullback of  $G$ by  $\alpha$.    If  $x\in \Ob(G)$, and $W\subseteq G $, we write $W_x$  for $W\cap \alpha^{-1}(x) $,  and  call   $W_x$  the {\em star} of $W$ at $x$.
Especially we write $G_x$ for $\alpha^{-1}(x)$ and call {\em star} of $G$ at $x$. We denote the set of inverses of the morphisms in $W$ by $W^{-1}$.

 A {\em star  topological groupoid} is a groupoid in which the  stars $G_x$'s have topologies such that for each $g\in G(x,y)$ the  left (and hence right) translation
 \[L_g\colon G_y\rightarrow
G_x,h\mapsto g\circ h \]
 is a homeomorphism and $G$  is the topological sum of the $G_x$'s. A {\em topological groupoid} is a groupoid in which $G$ and $\Ob(G)$ have both topologies such that the structural maps of groupoid are continuous.

 A {\em group-groupoid} is a groupoid $G$ in which  the sets of objects and morphisms have both  group structures  and the product map $ G\times G\rightarrow G, (g,h)\mapsto gh$,  inverse $G\rightarrow G, g\mapsto g^{-1}$ and, the unit maps $\colon \{\star\} \rightarrow G$,  where $\{\star\}$ is singleton,  are morphisms of groupoids.

  In a group-groupoid $G$, we write $g\circ h$ for the composition of morphisms $g$ and $h$ in groupoid while $gh$ for  the product in group and write $\bar{g}$ for the  inverse of $g$ in groupoid and $g^{-1}$ for the one in group. Here note that the product map  is a morphism of groupoids if and only if the {\em interchange rule} \[(gh)\circ (kl)=(g\circ k) (h\circ l)\] is satisfied  for $g,h,k,l\in G$ whenever one side composite is defined.

 A {\em topological group-groupoid} is defined in \cite[Definition 1]{Icen-Gursy} as a group-groupoid which is also a topological groupoid and the structural maps of group multiplication are continuous.  We define a {\em star topological group-groupoid} as a group-groupoid which is also a star topological groupoid.

Let  $X$  be a topological space admitting a simply connected cover. A subset  $U$  of  $X$  is called {\em liftable} if  $U$  is open, path-connected and the inclusion $U\rightarrow X$ maps each fundamental group of $U$  trivially. If  $U$  is liftable, and  $q\colon Y\rightarrow X$  is a covering map,  then  for  any   $y\in Y $  and  $x\in U$  such that  $qy = x $, there is a unique map   $\hat{\imath}\colon U \rightarrow Y$   such  that  $\hat{\imath} x = y$  and $q\hat{\imath}$  is the  inclusion   $U\rightarrow X $.   A space $X$ is called {\em semi-locally simply connected} if each point has a liftable neighborhood and {\em locally simply connected} if it has a base of simply connected sets.

Let  $X$  be a topological space such that each  path component of  $X$  admits a simply connected covering space.  It is  standard  that  if the fundamental groupoid  $\pi X$  is provided with a topology  as  in  \cite{Br-Da}, then for an  $x\in X$ the target point map $t \colon (\pi X)_x\rightarrow X$   is the universal covering map of $X$  based  at  $x$  (see  also Brown~\cite[10.5.8]{Br1}).

 Let  $G$ be a topological groupoid and  $W$ an open subset of $G$  including all the identities. As a generalization  of the construction in \cite{Do-La}, the   monodromy groupoid $\M(G,W)$ is defined as the quotient groupoid $\F(W)/N$, where  $\F(W)$ is the free groupoid on $W$ and $N$ is the normal subgroupoid of $\F(W)$ generated by the elements of the form $[uv]^{-1}[u][v]$ whenever $uv\in W$ for $u,v\in W$.  Then  $\M(G,W)$ has a universal property that any local  morphism $f\colon W\rightarrow H$   globalizes  to a unique morphism $\tilde{f} \colon \M(G,W) \rightarrow H$ of groupoids.

Let  $G$  be a star topological groupoid such that each star $G_x$ has a universal cover.  The  groupoid
$\Mon(G)$  is defined in \cite{Ma} as the disjoint union of the universal covers of stars $G_x$'s  at  the base points identities.  Hence $\Mon(G)$ is disjoint union of  the stars $(\pi (G_x))_{\epsilon (x)}$.  The object   set $X$ of $\Mon(G)$ is the same as that of  $G $. The source point map    $\alpha \colon \Mon(G)\rightarrow X$ maps all stars  $(\pi (G_x))_{\epsilon (x)}$ to $x $, while the target point map $\beta \colon  \Mon(G)\rightarrow X$
is defined  on each star $(\pi (G_x))_{\epsilon (x)}$ as the composition of the two target point maps
\[ (\pi (G_x))_{\epsilon (x)} \stackrel{\beta} {\longrightarrow}  G_x
\stackrel{\beta} {\longrightarrow} X .\]
 As explained in
Mackenzie~\cite[p.67]{Ma} there is a partial composition on   $\Mon(G)$ defined by
\[    [a] \bullet [b] = [a\star (a(1)\circ b) ] \] where
$\star$,  inside the bracket, denotes the usual composition of
paths and $\circ$ denotes the composition in the groupoid. Here  $a(1)\circ b$ is the path defined by  $(a(1)\circ b)(t)=a(1)\circ b(t)$ $(0\leq t\leq 1)$. Here we point that since $G$ is a star topological groupoid, the left translation is a homeomorphism. Hence the path $a(1)\circ b$, which is a left translation of $b$ by $a(1)$,  is defined when  $b$ is a path.  So the path $a\star (a(1)\circ b) $ is defined by
\[( a\star (a(1)\circ b))(t) =\left\{\begin{array}{ll}
                a(2t),  &  \mbox {$0\leq t\leq \frac{1}{2}$}\\\\
           a(1)\circ b(2t-1),   &  \mbox{$ \frac{1}{2}\leq t\leq 1$}.
                \end{array}
                \right. \]
 Here if   $a $ is a  path  in   $G_x$   from  $\epsilon (x)$
to   $a(1) $,   where $\beta( a(1)) = y $, say, and   $b$  is a path
in  $G_y$ from $\epsilon (y)$ to $b(1)$, then for  each  $t\in [0,1] $ the  composite $a(1)\circ b(t)$
is defined in  $G_y $, yielding a  path $a(1)\circ b$ from $a(1)$  to
$a(1)\circ b(1)$.  It is straightforward to prove that in this  way a  groupoid is defined on $\Mon(G)$  and that the target point map of paths induces  a  morphism  of groupoids $p \colon
\Mon(G) \rightarrow G $.

The following theorem whose Lie version is given in \cite[Theorem 4.2]{Br-Mu2},  identifies two monodromy groupoids $\M(G,W)$ and $\Mon(G)$ as star topological groupoids.

\begin{theorem}\label{isomorp}   Let $G$ be a star connected topological groupoid such that each star $G_x$ has a simply connected cover. Suppose that $W$ is a star path connected neighborhood of $\Ob(G)$  in $G$ and $W^{2}$ is contained in a star path connected neighbourhood $V$ of $\Ob(G)$ such that for all $x\in \Ob(G)$,  $V_x$ is liftable. Then there is an isomorphism of star topological groupoids $\M(G,W)\rightarrow \Mon(G)$ and hence the morphism  $\M(G,W)\rightarrow G$  is a
star universal covering map.
\end{theorem}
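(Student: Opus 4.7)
The plan is to use the universal property of $\M(G,W)$ to construct a canonical groupoid morphism into $\Mon(G)$, show it is bijective on each star by a monodromy argument, and then identify the two natural star topologies via Theorem \ref{TheoremHolonomy}. First, I would define a local map $\iota\colon W\to \Mon(G)$ by sending $w\in W$ with $\alpha(w)=x$ to the homotopy class rel endpoints of any path in $W_x$ from $\epsilon(x)$ to $w$; such a path exists by star path connectedness of $W$, and any two choices are homotopic in $G_x$ because they lie in $W_x\subseteq V_x$ and $V_x$ is liftable. The universal property of the free groupoid extends $\iota$ to a morphism $\F(W)\to \Mon(G)$, and to factor through the quotient $\M(G,W)=\F(W)/N$ it suffices to verify that $\iota(u\circ v)=\iota(u)\bullet \iota(v)$ whenever $u,v,u\circ v\in W$. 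The concatenated representative for the right-hand side is $a\star (u\circ b)$, and each point $u\circ b(t)$ lies in $W\cdot W\subseteq V$, so the whole path lies in $V_x$; liftability of $V_x$ then makes it homotopic to any path representing $\iota(u\circ v)$, as required.

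Second, I would show that the resulting morphism $\phi\colon \M(G,W)\to \Mon(G)$ is bijective on every star. Surjectivity on $(\pi (G_x))_{\epsilon(x)}$ follows from star connectedness: any path from $\epsilon(x)$ to $g\in G_x$ admits, by a Lebesgue-number argument applied to the cover by left translates of $W$, a subdivision into finitely many segments each lying in such a translate, and composing the corresponding generators in $\F(W)$ produces a preimage whose image under $\phi$ is the given homotopy class. Injectivity uses the standard monodromy argument on homotopies: any square homotopy $[0,1]^{2}\to G_x$ can be subdivided so that each sub-square lies in a left translate of $V$, and the equality of classes across each sub-square is exactly a defining relator of $N\subseteq \F(W)$ thanks to the liftability of $V$.

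The main obstacle is the topological matching, which I would address using Theorem \ref{TheoremHolonomy}. The Aof--Brown holonomy construction equips $\M(G,W)$ with a star topology for which $W$ is embedded as an open neighborhood of $\Ob(G)$ and every left translation is a homeomorphism. On the $\Mon(G)$ side, each star $(\pi(G_x))_{\epsilon(x)}$ already carries the path-space topology of the universal cover of $G_x$, and the liftability of $V$ provides a canonical open embedding $W_x\hookrightarrow (\pi(G_x))_{\epsilon(x)}$ which coincides with the restriction of $\phi$ to $W_x$. Since both star topologies are generated from this common neighborhood of the identity by left translation within each star, $\phi$ restricts to a homeomorphism on each star, yielding the desired isomorphism of star topological groupoids. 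Finally, composing $\phi$ with the target map $p\colon \Mon(G)\to G$ recovers the canonical morphism $\M(G,W)\to G$; on each star this agrees with the universal covering $(\pi(G_x))_{\epsilon(x)}\to G_x$, so $\M(G,W)\to G$ is a star universal covering map.
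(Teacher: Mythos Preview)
The paper does not give its own proof of this theorem: it is stated in the preliminaries as the topological analogue of \cite[Theorem 4.2]{Br-Mu2} and simply cited from there. Your outline is essentially the argument of that reference---construct a local map $W\to\Mon(G)$ using liftability of $V_x$, globalize via the universal property of $\M(G,W)$, and prove star bijectivity by a Lebesgue/subdivision argument for paths and homotopies---so at the level of strategy there is nothing to compare.

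One point to tighten: you invoke Theorem~\ref{TheoremHolonomy} to put a star topology on $\M(G,W)$, but that theorem requires the triple $(\alpha,\beta,W)$ to have enough continuous admissible local sections, a hypothesis not present in Theorem~\ref{isomorp}. For the purely \emph{star} topological statement you do not need the holonomy machinery at all: the star topology on $\M(G,W)_x$ is the one for which the left translates $a\circ\imath(W_{\beta a})$ form a basis, and your observation that ``both star topologies are generated from this common neighborhood of the identity by left translation within each star'' is exactly what makes $\phi$ a homeomorphism on each star. So drop the appeal to Theorem~\ref{TheoremHolonomy} here and argue directly; the holonomy theorem is only needed later (Section~4 of the paper) when one wants a genuine topological-groupoid topology on $\M(G,W)$, and there the sectionability hypothesis is added explicitly.
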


\section{Monodromy groupoids as star topological group-groupoids}

In  \cite[Theorem 3.10 ]{Mu-Be-Tu-Na} it was proved that if $G$ is a topological group-groupoid in which each star $G_x$ has a universal cover, then $\Mon(G)$ is a group-groupoid. We can now state the following theorem in terms of star topological group-groupoids.
\begin{theorem}\label{Mongpgpd}  Let   $G$  be a topological group-groupoid such that  each star $G_x$ has a universal cover. Then the monodromy groupoid  $\Mon (G)$ is a star topological group-groupoid.
\end{theorem}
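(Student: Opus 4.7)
The plan is to build on \cite[Theorem 3.10]{Mu-Be-Tu-Na}, which already equips $\Mon(G)$ with the structure of a group-groupoid, and to verify that this structure is compatible with the natural star topology on $\Mon(G)$. So the task splits into two parts: (i) specify a star topology on $\Mon(G)$ making it a star topological groupoid, and (ii) verify continuity of the group product and the group inverse with respect to this star topology.

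For (i), give each star $\Mon(G)_x=(\pi(G_x))_{\epsilon(x)}$ the topology of \cite{Br-Da} under which $\beta\colon(\pi(G_x))_{\epsilon(x)}\to G_x$ is the universal cover based at $\epsilon(x)$, and take $\Mon(G)$ to be the topological sum of its stars. That each left translation $L_g\colon \Mon(G)_y\to \Mon(G)_x$ is a homeomorphism follows by functoriality: $L_g$ is the unique base-point preserving lift of the homeomorphism $L_g\colon G_y\to G_x$ through the universal cover, and $L_{g^{-1}}$ supplies its continuous inverse.

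For (ii), fix $x,y\in\Ob(G)$ and restrict the group product to $\mu_{x,y}\colon \Mon(G)_x\times \Mon(G)_y\to \Mon(G)_{xy}$. By the construction in \cite{Mu-Be-Tu-Na}, $\mu_{x,y}([a],[b])=[ab]$ where $(ab)(t)=a(t)b(t)$ is the pointwise product in the topological group $G$; this is a path in $G_{xy}$ starting at $\epsilon(x)\epsilon(y)=\epsilon(xy)$ since $\epsilon$ is a group homomorphism and the multiplication $m\colon G\times G\to G$ is continuous. Now consider
\[F\colon \Mon(G)_x\times \Mon(G)_y \xrightarrow{\beta\times\beta} G_x\times G_y \xrightarrow{m} G_{xy},\]
which is continuous and whose domain is simply connected (a product of simply connected universal covers of path-connected spaces). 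Covering-space theory supplies a unique base-point preserving continuous lift $\wtilde F$ through the universal cover $\beta\colon \Mon(G)_{xy}\to G_{xy}$. Since $\beta\circ\mu_{x,y}=F$ (the end-point of $[ab]$ is $a(1)b(1)$) and $\mu_{x,y}$ preserves base-points, uniqueness of lifts forces $\mu_{x,y}=\wtilde F$, hence $\mu_{x,y}$ is continuous.

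The inversion case is analogous: the continuous group inverse $\iota\colon G\to G$ restricts to a homeomorphism $G_x\to G_{x^{-1}}$ with $\epsilon(x)\mapsto\epsilon(x^{-1})$, and the same lifting argument produces a continuous base-point preserving lift that must coincide with the inversion on $\Mon(G)$ by uniqueness. The main subtlety throughout is this reconciliation between the combinatorial formula defining the group operations on $\Mon(G)$ and the analytic lifts supplied by covering-space theory; since both sides cover the same underlying map and agree on base-points, the uniqueness-of-lift principle identifies them and delivers the required continuity.
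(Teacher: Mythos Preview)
Your argument is correct and in part~(i) coincides with the paper's approach, but your part~(ii) does more than is needed. The paper defines a \emph{star topological group-groupoid} merely as a group-groupoid that is also a star topological groupoid; no continuity condition on the group multiplication or inverse is imposed (contrast the definition of \emph{topological} group-groupoid just above it, where continuity is explicitly required). Consequently the paper's own proof is a two-line assembly: endow each $\Mon(G)_x=(\pi(G_x))_{\epsilon(x)}$ with the universal-cover topology so that $\Mon(G)$ is a star topological groupoid, and quote \cite[Theorem~3.10]{Mu-Be-Tu-Na} for the group-groupoid structure. Your covering-space lifting argument in~(ii) is a genuine strengthening---it shows the group operations are continuous star-by-star---but it is not required for the theorem as stated in this paper.

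One small imprecision in~(i): the left translation $L_{[a]}\colon \Mon(G)_y\to \Mon(G)_x$ is not base-point preserving; it sends the constant class at $\epsilon(y)$ to $[a]$, not to the constant class at $\epsilon(x)$. The lifting argument can still be salvaged (specify the image of one point rather than demanding base-point preservation), but a cleaner route is to factor $L_{[a]}$ as the restriction of $\pi(L_{a(1)})\colon \pi(G_y)\to\pi(G_x)$ to the star at $\epsilon(y)$ (continuous by \cite[Proposition~3]{Br-Da}) followed by left translation by $[a]$ inside the topological groupoid $\pi(G_x)$.
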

\begin{proof} If each star  $G_x$ admits a universal  cover at $\epsilon (x)
$, then each star  $\Mon(G)_x$ may be given  a topology so that   it is
the universal cover of  $G_x$ based at $\epsilon (x) $, and then
$\Mon(G)$ becomes a star topological groupoid.  Further by the detailed proof of the Theorem  \cite[Theorem 3.10 ]{Mu-Be-Tu-Na}, we define a group structure on $\Mon(G)$   by
\[ \Mon(G)\times \Mon(G)\rightarrow \Mon(G), ([a],[b])\mapsto [ab] \]
such that  $\Mon(G)$ is a group-groupoid. The other details of the proof follow from the cited theorem.
\end{proof}

The following corollary is a result of Theorem \ref{isomorp} and  Theorem \ref{Mongpgpd}.
\begin{corollary}\label{isomongrp}
Let $G$ be a topological group-groupoid and $W$ an open subset of $G$ satisfying the conditions in Theorem \ref{isomorp}, then the monodromy groupoid $\M(G,W)$ is a star topological group-groupoid.
\end{corollary}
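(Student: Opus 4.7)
The plan is to transport structure across the isomorphism supplied by Theorem \ref{isomorp}. By that theorem, under the stated hypotheses on $W$, there is an isomorphism of star topological groupoids
\[ \varphi\colon \M(G,W)\longrightarrow \Mon(G), \]
and by Theorem \ref{Mongpgpd}, $\Mon(G)$ carries the structure of a star topological group-groupoid. The idea is to pull the group operation on $\Mon(G)$ back through $\varphi$ and check that the result is compatible with both the groupoid structure and the star topology already present on $\M(G,W)$.

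First I would define the group multiplication on $\M(G,W)$ by transport: for $m,n\in\M(G,W)$, set $m\cdot n := \varphi^{-1}(\varphi(m)\,\varphi(n))$, and define the group inverse analogously from the one on $\Mon(G)$. Since $\varphi$ is a bijection of underlying sets, this is a well-defined group operation with identity $\varphi^{-1}$ of the identity of $\Mon(G)$. Next, because $\varphi$ is a morphism of groupoids, the two structural maps (source/target) and the groupoid composition are intertwined with the corresponding ones on $\Mon(G)$; together with the fact that on $\Mon(G)$ the group operations are groupoid morphisms (the interchange rule), this transports directly, so $\cdot$ and the group inverse are groupoid morphisms on $\M(G,W)$. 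Hence $\M(G,W)$ is a group-groupoid.

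Then I would address the topological side. As a star topological isomorphism, $\varphi$ restricts on each star to a homeomorphism $\varphi_x\colon \M(G,W)_x\to \Mon(G)_x$. The group multiplication on $\Mon(G)$ restricts to continuous maps on stars (this is encoded in Theorem \ref{Mongpgpd}); pulling these back through the $\varphi_x$'s we see that the newly defined multiplication on $\M(G,W)$ is continuous on stars as well, and similarly for inversion. Combined with the star topological groupoid structure that $\M(G,W)$ already carries via $\varphi$, this yields precisely the definition of a star topological group-groupoid given in Section~2.

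The main (really only) potential obstacle is bookkeeping: making sure that the group operation on $\M(G,W)$ defined by transport does not clash with the intrinsic free-groupoid description of $\M(G,W)$, and that the pulled-back multiplication is compatible across different stars in the expected way. This is handled automatically because $\varphi$ is simultaneously a groupoid isomorphism and a homeomorphism on each star, so every algebraic and topological axiom needed on $\M(G,W)$ is obtained by literally conjugating the corresponding axiom on $\Mon(G)$ by $\varphi$.
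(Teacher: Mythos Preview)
Your argument is correct and is precisely the transport-of-structure reading the paper intends: the corollary is presented there without proof as an immediate consequence of Theorem~\ref{isomorp} and Theorem~\ref{Mongpgpd}, and pulling the group operation back along the star topological groupoid isomorphism is the only sensible way to combine them. One minor remark: the paper's definition of \emph{star topological group-groupoid} is simply ``a group-groupoid which is also a star topological groupoid,'' with no continuity requirement on the group multiplication, so your third paragraph verifies more than is strictly necessary---but it does no harm.
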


Let  $\TopGrpGpd$ be the category whose objects are topological group-groupoids and morphisms are the continuous groupoid morphisms preserving group operation; and let  $\TGrpGpd$ be the full subcategory of  $\TopGrpGpd$ on those objects which are topological group-groupoids whose stars have universal covers.  Let  $\STopGrpGpd$ be the category whose objects are  star topological group-groupoids and the morphisms are those of group-groupoids which are continuous on stars. Then we have the following.
\begin{proposition} \[\Mon\colon \TGrpGpd\rightarrow \STopGrpGpd  \]
which assigns  the monodromy groupoid $\Mon(G)$ to such a topological group-groupoid $G$ is a functor.
\end{proposition}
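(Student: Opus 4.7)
The plan is to define the action of $\Mon$ on morphisms and then verify the two functoriality axioms; the object-level structure is already supplied by Theorem~\ref{Mongpgpd}. Let $f\colon G\to H$ be a morphism in $\TGrpGpd$, i.e. a continuous group-groupoid morphism. Since $f$ preserves $\alpha, \beta, \epsilon$, its restriction $f_x\colon G_x\to H_{f(x)}$ is a continuous map of based topological spaces sending $\epsilon(x)$ to $\epsilon(f(x))$. I would then set $\Mon(f)\colon \Mon(G)\to \Mon(H)$ to equal $f$ on objects and to be $\Mon(f)[a]=[f\circ a]$ on a homotopy class of paths $a\colon [0,1]\to G_x$ starting at $\epsilon(x)$.

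Next I would check that $\Mon(f)$ lies in $\STopGrpGpd$. Well-definedness on homotopy classes is immediate from the continuity of each $f_x$. Compatibility with source and target maps follows because $f$ commutes with $\epsilon$ and $\beta$. For the groupoid composition $\bullet$ on $\Mon(G)$ one uses the formula $[a]\bullet [b]=[a\star (a(1)\circ b)]$: since $f$ preserves the groupoid composition $\circ$ and manifestly commutes with concatenation $\star$ and pointwise evaluation, one obtains
\[
\Mon(f)([a]\bullet [b]) = [(f\circ a)\star (f(a(1))\circ (f\circ b))] = \Mon(f)[a]\bullet \Mon(f)[b].
\]
For the group operation, the proof of Theorem~\ref{Mongpgpd} identifies the product on $\Mon(G)$ as $[a][b]=[ab]$, where $ab$ denotes the pointwise product of paths; since $f$ is a group morphism, $f\circ(ab)=(f\circ a)(f\circ b)$, and preservation follows at once.

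The topological point is continuity of $\Mon(f)$ on each star. The restriction $\Mon(f)\colon \Mon(G)_x\to \Mon(H)_{f(x)}$ fits into the square whose vertical arrows are the universal covering maps $\Mon(G)_x\to G_x$ and $\Mon(H)_{f(x)}\to H_{f(x)}$; by the uniqueness of lifts it is precisely the lift of the continuous based map $f_x$ to universal covers. Its existence and continuity are standard covering space theory since $\Mon(G)_x$ is simply connected and $\Mon(H)_{f(x)}\to H_{f(x)}$ is a covering map. Functoriality is now trivial from the definition: $\Mon(\mathrm{id}_G)[a]=[a]$, and $\Mon(g\circ f)[a]=[g\circ f\circ a]=\Mon(g)\Mon(f)[a]$.

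The only substantive obstacle is bookkeeping: one must keep careful track of three distinct operations (groupoid composition $\circ$, group product, and path concatenation $\star$) and verify that each of them commutes with post-composition by $f$ under the appropriate hypothesis on $f$. Nothing genuinely deep enters, and the topological continuity is handled uniformly by viewing $\Mon(f)$ restricted to each star as a lift through a universal cover.
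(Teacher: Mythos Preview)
Your proposal is correct and follows essentially the same approach as the paper: define $\Mon(f)$ on path classes by $[a]\mapsto[f\circ a]$, then check it is a morphism of star topological group-groupoids, with functoriality immediate. The only minor difference is in how continuity on stars is justified: the paper invokes \cite[Proposition~3]{Br-Da} to obtain continuity of $\pi(f)\colon \pi(G_x)\to\pi(H_{f(x)})$ as topological groupoids and then restricts to the identity star, whereas you argue directly via the lifting property of universal covers; since the topology on $\pi(G_x)$ is chosen precisely so that the star at $1_x$ is the universal cover of $G_x$, these two justifications amount to the same thing.
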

\begin{proof} We know from Theorem \ref{Mongpgpd} that if $G$ is a topological group-groupoid in which the stars have universal covers, then  $\Mon(G)$ is also a star topological group-groupoid. Let  $f\colon G\rightarrow H$ be a morphism of  $\TGrpGpd$.  Then   the restriction $f\colon G_x\rightarrow H_{f(x)}$ is continuous and hence by \cite[Proposition 3]{Br-Da}, the induced  morphism $\pi(f)\colon \pi(G_x)\rightarrow \pi(H_{f(x)})$, which is a morphism of topological groupoids, is continuous. Latter morphism is restricted to the continuous map
$\pi(f)\colon \pi(G_x)_{1_x}\rightarrow \pi(H_{f(x)})_{1_{f(x)}}$ which is $\Mon(f)\colon (\Mon(G))_x\rightarrow (\Mon(H))_{f(x)}$. That means  $\Mon(f)$ is a morphism of star topological group-groupoids.  The other details of the proof is straightforward.
\end{proof}

We need the following results  in the proof of Theorem \ref{MonProduct}.

\begin{proposition}  {\rm \cite[Theorem 1]{Br-Da}}  \label{Fundgdtopgd} If $X$ is a locally path connected and
semi-locally simply connected space, then the fundamental groupoid $\pi X$ may be given a topology making it  a topological groupoid.
\end{proposition}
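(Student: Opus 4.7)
The plan is to topologize $\pi X$ by prescribing a basis whose members are parametrized by path classes together with pairs of liftable neighborhoods of their endpoints, and then to verify that the structural maps of the groupoid become continuous. Using that $X$ is locally path connected and semi-locally simply connected, I first fix a basis $\mathcal{B}$ for $X$ consisting of path connected liftable open sets. Given $[a] \in \pi X$ with $a(0)=x$, $a(1)=y$, and $U, V \in \mathcal{B}$ with $x \in U$, $y \in V$, I define
\[
N([a], U, V) = \{[\gamma \ast a \ast \delta] : \gamma \text{ a path in } U \text{ ending at } x,\ \delta \text{ a path in } V \text{ starting at } y\}.
\]
The liftability of $U$ and $V$ ensures that $[\gamma \ast a \ast \delta]$ depends only on $\gamma(0)$ and $\delta(1)$, so the members of $N([a], U, V)$ are parametrized cleanly by pairs of endpoints in $U \times V$.

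Next I would verify the basis axioms: if $[a'] \in N([a], U, V) \cap N([a''], U'', V'')$, say $[a'] = [\gamma_0 \ast a \ast \delta_0]$, then for any $U', V' \in \mathcal{B}$ with $a'(0) \in U' \subseteq U \cap U''$ and $a'(1) \in V' \subseteq V \cap V''$, one has $N([a'], U', V') \subseteq N([a], U, V) \cap N([a''], U'', V'')$ by absorbing the extra $U'$- and $V'$-tails into the $U$- and $V$-tails. Continuity of source, target, and inversion is then immediate from
\[
\alpha(N([a], U, V)) \subseteq U, \quad \beta(N([a], U, V)) \subseteq V, \quad N([a], U, V)^{-1} = N([a^{-1}], V, U),
\]
and the object inclusion $x \mapsto [\epsilon_x]$ is continuous since $\alpha$ and $\beta$ restrict to the identity on its image.

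The main obstacle is continuity of the composition map $\pi X \times_X \pi X \to \pi X$. Given $([a], [b])$ with $a(1)=b(0)=y$ and a basic neighborhood $N([a \ast b], U, V)$ of the composite, I would pick $W \in \mathcal{B}$ containing $y$. For any composable pair $([a'], [b']) \in (N([a], U, W) \times N([b], W, V)) \cap (\pi X \times_X \pi X)$, I write $[a'] = [\gamma \ast a \ast \delta_1]$ and $[b'] = [\delta_2 \ast b \ast \eta]$ with $\delta_1, \delta_2$ paths in $W$; composability forces $\delta_1(1) = \delta_2(0)$. Then $\delta_1 \ast \delta_2$ is a loop at $y$ in $W$, which is null-homotopic in $X$ by the liftability of $W$, and hence
\[
[a'] \circ [b'] = [\gamma \ast a \ast \delta_1 \ast \delta_2 \ast b \ast \eta] = [\gamma \ast (a \ast b) \ast \eta] \in N([a \ast b], U, V).
\]
This establishes continuity. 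The crux of the whole argument is the vanishing of the middle loop $\delta_1 \ast \delta_2$ in $\pi_1(X, y)$, which is precisely where the semi-local simple connectivity of $X$ is used essentially; the local path connectedness is used to ensure that $\mathcal{B}$ is a genuine basis and that the tail paths $\gamma, \delta$ exist in any liftable neighborhood.
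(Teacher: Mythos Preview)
The paper does not supply a proof of this proposition at all: it is stated purely as a citation of \cite[Theorem 1]{Br-Da} and used as a black box in the proof of Theorem~\ref{MonProduct}. Your write-up is essentially a faithful reconstruction of the original Brown--Danesh-Naruie argument (lifted-rectangle basis $N([a],U,V)$, basis axioms via shrinking to $U'\subseteq U\cap U''$ and $V'\subseteq V\cap V''$, and continuity of composition by killing the middle $W$-loop with semi-local simple connectivity), so there is nothing in the paper to compare against beyond the bare citation.

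One small comment on your sketch: the sentence ``the object inclusion $x\mapsto[\epsilon_x]$ is continuous since $\alpha$ and $\beta$ restrict to the identity on its image'' is not an argument---continuity of $\alpha,\beta$ does not by itself give continuity of a section. What you want to say instead is that for any $U\in\mathcal{B}$ and $x_0\in U$ one has $\epsilon(U)\subseteq N([\epsilon_{x_0}],U,U)$, which exhibits the preimage of a basic set as open. Everything else in your outline is correct.
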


\begin{theorem} \label{Productgrpd}{\rm \cite[Theorem 3.8]{Mu-Be-Tu-Na}}\label{Theocarptopgpd} If    $X$ and $Y$ are  locally path connected and
locally simply connected topological spaces, then  $\pi (X\times
Y)$ and $\pi (X)\times \pi (Y)$ are isomorphic as topological
groupoids.
\end{theorem}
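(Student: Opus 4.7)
The plan is to prove the statement in two stages: first establish the isomorphism purely as groupoids, then upgrade it to a homeomorphism. Define the natural candidate map
\[ \phi\colon \pi(X\times Y)\rightarrow \pi(X)\times \pi(Y),\qquad [\gamma]\mapsto ([p_X\circ\gamma],\,[p_Y\circ\gamma]), \]
where $p_X,p_Y$ are the projections, and its candidate inverse
\[ \psi([\sigma],[\tau]) = [(\sigma,\tau)],\qquad (\sigma,\tau)(t)=(\sigma(t),\tau(t)). \]
First I would check that $\phi$ and $\psi$ are well-defined on homotopy classes (using that a homotopy in the product projects to homotopies in each factor, and conversely a pair of homotopies assembles into a product homotopy), and that they are mutually inverse. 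Then preservation of source, target, identities, and inverses is immediate from the definitions, and preservation of composition requires only the standard reparameterization argument showing $(\sigma\star\sigma')\simeq \sigma\star\sigma'$ componentwise. This gives the algebraic groupoid isomorphism.

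Next I would equip the three groupoids with their topologies. Since $X$ and $Y$ are locally path connected and (locally, hence) semi-locally simply connected, Proposition \ref{Fundgdtopgd} endows $\pi X$ and $\pi Y$ with topological groupoid structures; the product $X\times Y$ inherits the same local hypotheses, so $\pi(X\times Y)$ also carries such a topology. The key observation is that the topology on $\pi X$ (as in \cite{Br-Da}) has a base of sets of the form $[\gamma]\cdot U$ coming from liftable neighbourhoods $U$ of the endpoint, and that if $U\subseteq X$ and $V\subseteq Y$ are liftable, then $U\times V\subseteq X\times Y$ is liftable as well (simple connectivity of $U\times V$ follows from the Künneth-type argument since $\pi_1(U\times V)=\pi_1(U)\times\pi_1(V)$ and both factors are trivial).

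With this in hand I would show continuity of $\phi$ and $\psi$ by base-neighbourhood chasing. Continuity of $\phi$ is easy because projections are continuous, so the induced morphism of fundamental groupoids is continuous on each factor by \cite[Proposition 3]{Br-Da}, and $\phi$ factors through these. For $\psi$ I would observe that a basic neighbourhood of $([\sigma],[\tau])\cdot (U\times V)$ in $\pi(X\times Y)$ is carried by $\phi$ exactly to $([\sigma]\cdot U)\times ([\tau]\cdot V)$, so $\psi$ sends a basic product neighbourhood into a basic neighbourhood of $\pi(X\times Y)$. This shows $\phi$ is a homeomorphism and hence an isomorphism of topological groupoids.

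The main obstacle, I expect, is matching the two topologies precisely rather than the algebraic part: one must be careful that the base of neighbourhoods chosen for $\pi(X\times Y)$ really agrees (up to refinement) with products of bases for $\pi(X)$ and $\pi(Y)$, which hinges on the fact that every liftable neighbourhood of a point $(x,y)\in X\times Y$ can be shrunk inside a product $U\times V$ of liftable neighbourhoods. Once this local-geometry lemma is in place, both continuity statements become routine verifications from the definition of the topology on fundamental groupoids in \cite{Br-Da}.
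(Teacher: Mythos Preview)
The paper does not give its own proof of this statement: Theorem~\ref{Productgrpd} is merely quoted from \cite[Theorem 3.8]{Mu-Be-Tu-Na} and used as a black box in the proof of Theorem~\ref{MonProduct}. So there is nothing in the present paper to compare your argument against.

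That said, your outline is the standard and correct route. The groupoid isomorphism is routine, and you have correctly identified the only genuine issue: matching the Brown--Danesh-Naruie topologies. Your key lemma---that a liftable neighbourhood of $(x,y)$ in $X\times Y$ can be shrunk to a product $U\times V$ of liftable neighbourhoods---is exactly where the hypothesis \emph{locally} simply connected (rather than merely semi-locally simply connected) is used: one chooses $U,V$ simply connected, so $U\times V$ is simply connected and hence liftable. With that in hand, the basic open sets for $\pi(X\times Y)$ and for $\pi(X)\times\pi(Y)$ correspond under $\phi$, and both continuity directions follow. One small caution: be precise about the form of the basic open sets in \cite{Br-Da} (they depend on liftable neighbourhoods at both the source and the target of a path class, not just one end), but this does not affect the substance of your argument.
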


\begin{theorem}\label{MonProduct} For the topological group-groupoids $G$ and $H$ whose stars have universal covers, the monodromy groupoids  $\Mon(G\times H)$ and $\Mon(G)\times \Mon(H)$ as star topological group-groupoids are isomorphic.
\end{theorem}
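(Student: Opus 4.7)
The plan is to build the isomorphism star by star, using Theorem \ref{Productgrpd} as the essential ingredient, and then check compatibility with the source/target maps, the partial composition, the group multiplication, and the star topology. First I would identify the stars. The product $G\times H$ is a topological group-groupoid whose star at an object $(x,y)\in\Ob(G)\times\Ob(H)$ is $(G\times H)_{(x,y)}=G_x\times H_y$, with identity $\epsilon(x,y)=(\epsilon(x),\epsilon(y))$. Since $G_x$ and $H_y$ each admit a universal cover, they are in particular connected, locally path connected and semi-locally simply connected, so Theorem \ref{Productgrpd} yields an isomorphism of topological groupoids $\pi(G_x\times H_y)\cong \pi(G_x)\times\pi(H_y)$. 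Restricting to stars at the identities gives
\[
\Mon(G\times H)_{(x,y)} \;=\; \bigl(\pi(G_x\times H_y)\bigr)_{\epsilon(x,y)} \;\cong\; \bigl(\pi(G_x)\bigr)_{\epsilon(x)} \times \bigl(\pi(H_y)\bigr)_{\epsilon(y)} \;=\; (\Mon(G)\times\Mon(H))_{(x,y)},
\]
sending a class $[c]$ of a path $c$ in $G_x\times H_y$ based at $\epsilon(x,y)$ to $([p_Gc],[p_Hc])$, where $p_G,p_H$ denote the two projections. Taking the disjoint union over $(x,y)$ produces a bijection $\Phi\colon \Mon(G\times H)\to \Mon(G)\times\Mon(H)$ which is a homeomorphism on each star.

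Next I would verify that $\Phi$ is a morphism of group-groupoids. The source and target maps are preserved because $p_G$ and $p_H$ commute with $\alpha,\beta$ on $G\times H$ and with endpoint evaluation of paths. For the partial composition one uses the formula $[a]\bullet [b]=[a\star(a(1)\circ b)]$ together with the fact that composition in $G\times H$ is componentwise, so $p_G\bigl(a\star(a(1)\circ b)\bigr)=(p_Ga)\star(p_Ga(1)\circ p_Gb)$ and likewise for $H$; this is precisely the $\bullet$-composition on each factor. For the group structure, the product on $\Mon$ is defined by $[a][b]=[ab]$ with $(ab)(t)=a(t)b(t)$ computed in the ambient group; since multiplication in $G\times H$ is componentwise, $p_G(ab)=(p_Ga)(p_Gb)$, and similarly for $H$, so $\Phi$ is a group homomorphism as well. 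Since $\Phi$ is already a homeomorphism on each star by the previous paragraph, it is a morphism of star topological group-groupoids, and the same argument applied to the inverse $\Phi^{-1}$ shows that it too is a morphism.

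The one place where care is needed is the application of Theorem \ref{Productgrpd} to the stars, which as stated requires the local simple connectedness hypothesis on $G_x$ and $H_y$. I expect this to be handled by the standing assumption that each star admits a universal cover, together with the standard topological fact that the hypotheses used in the proof of Theorem \ref{Productgrpd} (local path connectedness and semi-local simple connectedness) are exactly those guaranteeing the existence of the universal cover that defines $\Mon$. Everything else is routine bookkeeping flowing from the componentwise definitions of source, target, composition, multiplication and the product topology, so I anticipate no serious obstacle beyond checking that Theorem \ref{Productgrpd} applies star by star.
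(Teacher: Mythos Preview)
Your proposal is correct and follows essentially the same route as the paper: both apply Theorem~\ref{Productgrpd} star by star to obtain homeomorphisms $(\Mon(G\times H))_{(x,y)}\cong(\Mon(G))_x\times(\Mon(H))_y$, glue them into the map $[a]\mapsto([p_1a],[p_2a])$, and then argue this is an isomorphism of star topological group-groupoids. The only difference is that the paper outsources the verification that this map is a group-groupoid isomorphism to \cite[Theorem~2.1]{Mu-Be-Tu-Na}, whereas you check the compatibility with $\bullet$ and the group product directly; your caution about the hypotheses of Theorem~\ref{Productgrpd} is well placed, but the paper simply invokes that theorem without further comment.
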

\begin{proof} Let $G$ and $H$ be the topological group-groupoids such that the stars have the universal covers. Then by Theorem \ref{Mongpgpd},  $\Mon(G\times H)$ and $\Mon(G)\times \Mon(H)$ are star topological group-groupoids and by  \cite[Theorem 2.1]{Mu-Be-Tu-Na} we know that these groupoids $\Mon(G\times H)$ and $\Mon(G)\times \Mon(H)$ are isomorphic. By the fact that the star $(\Mon(G))_x$ is the star $(\pi(G_x))_{\epsilon(x)}$   of the fundamental groupoid $\pi(G_x)$ we have the following evaluation

\begin{align*}
  (\Mon(G\times H))_{(x,y)}&= \pi((G\times H)_{(x,y)})_{\epsilon(x,y)}\\
  &=(\pi(G_x\times H_y))_{\epsilon(x,y)}\\
   &\simeq (\pi(G_x))_{\epsilon(x)}\times (\pi(H_y))_{\epsilon(y)}\tag{by Theorem  \ref{Productgrpd}} \\
   &=(\Mon(G))_x\times (\Mon(H))_y
\end{align*}

Hence we have a homeomorphism $(\Mon(G\times H))_{(x,y)}\rightarrow (\Mon(G))_x\times (\Mon(H))_y$ on the stars and then by gluing these homeomorphisms on the stars we have an isomorphism $f\colon \Mon(G\times H)\rightarrow \Mon(G)\times \Mon(H)$ defined by $f([a])=([p_1 a],[p_2 a])$ for $[a]\in \Mon(G\times H)$, which is identity on objects.  Here $f$ is reduced to the homeomorphisms on the stars  and it  is also a morphism of group-groupoids. Hence $f$ is an isomorphism of star topological group-groupoids and therefore the star topological group-groupoids  $\Mon(G\times H)$ and $\Mon(G)\times \Mon(H)$  are isomorphic.
\end{proof}

Before giving the group-groupoid version of the monodromy principle we give definition of local morphism for group-groupoids adapted from definition of local morphism of groupoids in \cite{Mu1}.

\begin{definition} \rm
Let $G$ and $H$ be group-groupoids. A {\em local morphism}  from $G$ to $H$ is a map $f \colon W\rightarrow H$ from a subset $W$ of $G$, including the identities, satisfying the  conditions  $\alpha_H(f(u))=f(\alpha_G (u))$, $\beta_H(f(u))=f(\beta_G (u))$,  $f(u\circ v)=f(u)\circ f(v)$ and $f(uv)=f(u)f(v)$  whenever  $u, v \in W$, $u\circ v\in W$ and $uv\in W$.

A {\em local morphism of star topological  group-groupoids} is a local morphism of group-groupoids which is continuous on the stars.

Let  $G$ and $H$ be topological group-groupoids and $W$  an open neighborhood of $\Ob(G)$.  A {\em local morphism} from $G$ to $H$  is a continuous local morphism  $f\colon W\rightarrow H$  of group-groupoids.
\end{definition}

We  can now prove a weak monodromy principle for star topological group-groupoids.

\begin{theorem}\label{weak} {\em  (Weak Monodromy Principle) }
Let  $G$   be a star connected topological group-groupoid and  $W$ an open and star connected subgroup of  $G$ containing  $\Ob(G)$ and satisfying the condition in Theorem \ref{isomorp}. Let  $H$  be a star topological group-groupoid  and  $f \colon W\rightarrow H$  a local morphism  of star topological group-groupoids which is  identity on $\Ob(G)$.  Then  $f$ globalizes  uniquely   to a morphism  $\tilde{f} \colon \M(G,W)\rightarrow H$ of star topological group-groupoids.
\end{theorem}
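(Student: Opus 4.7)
The plan is to first obtain $\tilde{f}$ as a star continuous groupoid morphism via the already-established groupoid version of the monodromy principle, and then upgrade it to a morphism of group-groupoids using the interchange rule together with the hypothesis that $W$ is a subgroup.

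Ignoring the group multiplication, $f\colon W\to H$ is a continuous local morphism of star topological groupoids. By the monodromy principle for star topological groupoids (as in \cite{Mu1} and \cite{Br-Mu2}, with the star topology on $\M(G,W)$ coming from its identification with $\Mon(G)$ in Theorem \ref{isomorp}), $f$ extends uniquely to a morphism $\tilde{f}\colon \M(G,W)\to H$ of star topological groupoids. This also handles uniqueness in our setting, since any extension to a morphism of star topological group-groupoids is in particular a star continuous groupoid morphism, hence must agree with $\tilde{f}$.

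It remains to verify that $\tilde{f}$ respects group multiplication, and here the subgroup hypothesis on $W$ is essential. Given $g,h\in \M(G,W)$, decompose each as a groupoid composition of generators from $W$, padding with identity generators $[\epsilon(y)]\in W$ where needed so that both decompositions have the same length:
\[ g = [u_1]\circ\cdots\circ[u_n], \qquad h = [v_1]\circ\cdots\circ[v_n]. \]
Iterated application of the interchange rule in $\M(G,W)$ (which is a group-groupoid by Corollary \ref{isomongrp}) yields
\[ gh = [u_1 v_1]\circ\cdots\circ[u_n v_n], \]
where each group product $u_iv_i$ lies in $W$ because $W$ is a subgroup, and where we have used that on single generators the group product in $\M(G,W)$ satisfies $[u]\cdot[v]=[uv]$ (visible from the description $[a][b]=[ab]$ of the product in $\Mon(G)$ applied to paths that remain in $W$). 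Using that $\tilde{f}$ is a groupoid morphism extending $f$ and that $f$ is a group morphism on $W$,
\[ \tilde{f}(gh) = f(u_1 v_1)\circ\cdots\circ f(u_n v_n) = \bigl(f(u_1)f(v_1)\bigr)\circ\cdots\circ\bigl(f(u_n)f(v_n)\bigr), \]
and applying the interchange rule in the group-groupoid $H$ to the right-hand side gives exactly $\tilde{f}(g)\tilde{f}(h)$.

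The main obstacle is this second step, in particular handling the situation where the reduced words representing $g$ and $h$ in $\F(W)$ involve groupoid inverses $[u]^{-1}$ of generators. This is resolved by observing that in any group-groupoid the groupoid inverse is automatically a group morphism (so $\overline{ab}=\bar a\bar b$): hence $\tilde{f}$ commutes with groupoid inverses, and the verification reduces to the case of genuine generators, where it holds by the interchange computation above.
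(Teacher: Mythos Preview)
Your proof is correct and follows essentially the same strategy as the paper: first obtain $\tilde{f}$ as a star continuous groupoid morphism from the universal property, then verify multiplicativity by writing elements as composites of generators from $W$, padding with identities, and applying the interchange rule together with the hypothesis that $W$ is a subgroup. Your treatment is in fact slightly more careful than the paper's, since you explicitly address uniqueness and the possible occurrence of groupoid inverses of generators (which the paper glosses over by simply writing $a=u_1\circ\cdots\circ u_n$ with $u_i\in W'$); note that this last point can also be handled directly by observing that $W$, being a subgroup containing $\Ob(G)$, is closed under groupoid inversion via $\bar u=1_{\beta u}\,u^{-1}\,1_{\alpha u}$, so $[u]^{-1}=[\bar u]$ in $\M(G,W)$.
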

\begin{proof} Here  remark  that by  Corollary \ref{isomongrp},  $\M(G,W)$ is a star topological group-groupoid and by
the construction of $\M(G,W)$  we have an inclusion map $\imath \colon W\rightarrow \M(G,W)$ and  $W$ is homeomorphic to  $\imath(W)=W'$ which generates $\M(G,W)$.   The existence of $\tilde{f}\colon \M(G,W)\rightarrow H$  as  a groupoid morphism follows from the universal property of free groupoid $\F(W)$ and  the fact that $\M(G,W)$ is generated by $W'$.  Hence one needs to show that $\tilde{f}$ is a group-groupoid morphism, i.e., it preserves the group  operation.  Let  $a$ and $b$ be the morphisms of $\M(G,W)$. Since $W'$ generates $\M(G,W)$, $a$ and $b$ are written as $a= u_1\circ u_2 \ldots \circ  u_n$ and $b= v_1 \circ v_2 \ldots\circ v_m $ for $u_i,v_j\in W'$.  Since $W$ is group and so also is $W'$ we have $u_i v_i \in W'$ for all $i$. Then by the interchange rule we  have the following evaluation for $m\geq n$
\begin{align*}
  \tilde{f}(a  b)&= \tilde{f}((u_1\circ u_2 \ldots \circ  u_n) (v_1 \circ v_2 \ldots\circ v_m))\\
  &=\tilde{f}( u_1 v_1\circ \ldots \circ u_n  v_n\circ 1_{\beta(u_n)}v_{n+1}\circ\ldots \circ 1_{\beta(u_n)}v_m) \\
   &=f( u_1 v_1\circ \ldots \circ u_n  v_n\circ 1_{\beta(u_n)}v_{n+1}\circ\ldots \circ 1_{\beta(u_n)}v_m)  \\
   &=f(u_1) f(v_1)\circ \ldots \circ f(1_{\beta(u_n)})f(v_m)\\
   &=(f(u_1)\circ \ldots\circ f(u_n))  (f(v_1)\circ \ldots \circ f(v_m)) \\
    &=f(u_1\circ \ldots\circ u_n)  f(v_1\circ \ldots \circ v_m) \\
   &=\tilde{f}(a)  \tilde{f}(b).
   \end{align*}
Since $\M(G,W)$ is generated by $W'$, the continuity of $\tilde{f}\colon \M(G,W)\rightarrow H$ on stars follows by the continuity of $f\colon W\rightarrow H$.
\end{proof}

As a result of Theorem \ref{weak} we have the following corollary.
\begin{corollary} \label{weakmonodromyprinciple}
Let  $G$  be a star topological group-groupoid which is star connected and star simply connected and let $W$ be an open and star connected subgroup of  $G$ containing  $\Ob(G)$ and satisfying the condition in Theorem \ref{isomorp}.   Let  $H$  be a star topological group-groupoid  and let $f \colon W\rightarrow H$  be a local morphism  of star topological group-groupoids which is  identity on $\Ob(G)$.  Then  $f$   globalizes uniquely to a morphism  $\tilde{f} \colon G\rightarrow H$ of star topological group-groupoids.
\end{corollary}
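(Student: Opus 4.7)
The plan is to deduce this from Theorem \ref{weak} by showing that under the star simply connected hypothesis on $G$, the star universal covering morphism $p \colon \M(G,W) \to G$ of Theorem \ref{isomorp} is an isomorphism of star topological group-groupoids, and then to transport the globalization obtained from Theorem \ref{weak} through this isomorphism.

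First I would observe that since each star $G_x$ is path connected and simply connected, it is its own universal cover based at $\epsilon(x)$; in particular the existence of universal covers of stars required to apply Theorem \ref{Mongpgpd} and Theorem \ref{isomorp} is automatic. The star $(\pi(G_x))_{\epsilon(x)}$ of $\Mon(G)$ is then homeomorphic to $G_x$ via the target map, so the canonical morphism $\Mon(G) \to G$ is an isomorphism of star topological groupoids. Combined with the isomorphism $\M(G,W) \to \Mon(G)$ provided by Theorem \ref{isomorp}, this yields an isomorphism $q \colon \M(G,W) \to G$ of star topological groupoids whose restriction to the image $W'$ of the canonical inclusion $W \hookrightarrow \M(G,W)$ is the given inclusion of $W$ into $G$.

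Next I would verify that $q$ preserves the group-groupoid structure. By Corollary \ref{isomongrp} and Theorem \ref{Mongpgpd} both $\M(G,W)$ and $\Mon(G)$ are star topological group-groupoids, and the multiplication on $\Mon(G)$ is defined by $[a][b] = [ab]$, so the canonical map $\Mon(G) \to G$ satisfies $p([a])\,p([b]) = a(1)b(1) = (ab)(1) = p([a][b])$; hence $p$, and therefore $q$, preserves the group operation as well. Being also a homeomorphism on stars, $q$ is an isomorphism in $\STopGrpGpd$.

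Finally, applying Theorem \ref{weak} to $f \colon W \to H$ produces a unique morphism $\wtilde{f}_0 \colon \M(G,W) \to H$ of star topological group-groupoids extending $f$, and I would define $\wtilde{f} = \wtilde{f}_0 \circ q^{-1} \colon G \to H$. For uniqueness, any extension $\wtilde{f}' \colon G \to H$ of $f$ gives a morphism $\wtilde{f}' \circ q \colon \M(G,W) \to H$ extending $f$ along $W \hookrightarrow \M(G,W)$, so $\wtilde{f}' \circ q = \wtilde{f}_0$ by the uniqueness in Theorem \ref{weak}, and hence $\wtilde{f}' = \wtilde{f}$. The main technical step is checking that the isomorphism $q$ is compatible with the group operation and not merely with the star topological groupoid structure; this is precisely where the explicit formula $[a][b] = [ab]$ on $\Mon(G)$ from Theorem \ref{Mongpgpd} is used.
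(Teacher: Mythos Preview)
Your proposal is correct and follows essentially the same route as the paper: use star simple connectedness to identify $\Mon(G)$ with $G$, combine with the isomorphism $\M(G,W)\cong\Mon(G)$ of Theorem~\ref{isomorp}, and then invoke Theorem~\ref{weak}. Your write-up is in fact more careful than the paper's, which leaves the compatibility of the identification $q$ with the group operation (and the uniqueness argument) implicit; your explicit check via the formula $[a][b]=[ab]$ from Theorem~\ref{Mongpgpd} is exactly the point needed to justify that $q$ lives in $\STopGrpGpd$ rather than merely in star topological groupoids.
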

\begin{proof} Since $G$ is star connected and star simply connected,  $\Mon(G)$, as a star topological groupoid, becomes isomorphic  to $G$;  and by Theorem \ref{isomorp} and  Corollary
  \ref{isomongrp} $\M(G,W)$ and $\Mon(G)$ are isomorphic as star topological groupoids. Hence the rest of the proof follows from Theorem  \ref{weak}.\end{proof}

\section{Topological  structure on monodromy groupoid as group-groupoid}
In this section we prove that if $G$ is a topological group-groupoid in which each star has a universal cover  and $W$ is a useful open subset of $G$, including the identities, then the monodromy groupoid $\M(G,W)$ becomes a topological group-groupoid with the topology obtained by Theorem \ref{TheoremHolonomy}.

Let $G$ be a groupoid and $X=\Ob(G)$  a topological space. An \textit{admissible local section} of $G$, which is due to Ehresmann  \cite{Eh},  is a function $\sigma\colon U\rightarrow G$ from an open subset of $X$ such that the following holds.
\begin{enumerate}
\item $\alpha\sigma(x)=x$ for all $x\in U$;
\item $\beta\sigma(U)$ is open in $X$;
\item $\beta\sigma$ maps $U$ topologically to $\beta\sigma(U)$.
\end{enumerate}
Here the set $U$ is called {\em domain} of $s$ and written as $D_s$. Let $\Gamma(G)$ be the set of all admissible local sections of $G$. A product defined on $\Gamma(G)$ as follows: for any two admissible local sections
\[(st)x=(sx)(t\beta sx) \]
$s$ and $t$ are composable if $D_{st}=D_s$. If $s$ is admissible local section then $s^{-1}$ is also an admissible local section $\beta sD_s\rightarrow G$, $\beta sx \mapsto (sx)^{-1}$.

Let $W$ be a subset of $G$ and let $W$ have a topology such that $X$ is a subspace. $(\alpha,\beta,W)$ is called \textit{enough continuous admissible local sections} or   {\em locally sectionable} if
\begin{enumerate}
\item $s\alpha(w)=w$;
\item $s(U)\subseteq W$;
\item $s$ is continuous from $D_s$ to $W$. Such $s$ is called continuous admissible local section.
\end{enumerate}

Holonomy groupoid  is constructed for a locally topological groupoid whose definition is as follows (see \cite{Br-Mu3} for a locally topological groupoid structure on a foliated manifold):

\begin{definition}\label{ltopgrpd} \rm \cite[Definition 1.2]{Ao-Br}
A locally topological groupoid is a pair $(G,W)$ where $G$ is a groupoid and $W$ is a topological space such that
\begin{enumerate}
\item $\Ob(G) \subseteq W\subseteq G$;
\item $W=W^{-1}$;
\item $W$ generates $G$ as a groupoid;
\item The set $W_\delta =(W\times_\alpha W)\cap \delta^{-1}(W)$ is open in $W\times_\alpha W$ and the restriction to $W_\delta$ of the difference map
$\delta\colon G\times_\alpha G \rightarrow G$ is continuous;
\item the restrictions to $W$ of the source and target point maps $\alpha$, $\beta$ are continuous and $(\alpha,\beta,W) $ has enough continuous admissible local sections.
\end{enumerate}
\end{definition}
Note that a topological groupoid is a locally topological groupoid but converse is not true.

The following globalization theorem assigns a topological groupoid called {\em holonomy groupoid} and denoted by $\Hol(G,W)$ or only $H$  to a locally topological groupoid $(G,W)$ and hence it is more useful to obtain an appropriate topology on the monodromy groupoid. We give an outline  of the proof since some details of the construction in the proof   are needed for Proposition \ref{Extendibility} and  Theorem \ref{isohol}.

\begin{theorem} \label{TheoremHolonomy} {\rm \cite[Theorem 2.1]{Ao-Br}} Let $(G,W)$  be a locally topological groupoid. Then there is a
topological  groupoid $H$, a morphism $\phi : H \rightarrow
G$  of groupoids and an embedding $i : W \rightarrow H$
 of $W$  to an open neighborhood of $\Ob(H)$  such
that the following  conditions are satisfied.

i) $\phi $ {\it is the identity on objects}, $\phi i =
id_{W} ,  \phi ^{-1}(W)$ {\it is open in} $H$,and the
restriction  $\phi _{W} : \phi^{-1}(W)  \rightarrow W$ {\it of}
$\phi $ {\it is continuous};

 {\it ii) if A is a} topological {\it groupoid and} $\xi :  A
\rightarrow G$ {\it is a morphism of groupoids such that:

a) $\xi $ is the identity on objects;

b) the restriction} $\xi _{W} : \xi ^{-1}(W) \rightarrow W$ {\it
of}  $\xi $ {\it is continuous and} $\xi ^{-1}(W)$ {\it is open in A
and generates A;}

$c)$ {\it the triple} $(\alpha _{A} , \beta _{A} , A)$ {\it is
locally  sectionable,}

\noindent {\it then there is a unique morphism} $\xi ^\prime : A
\rightarrow H$  {\it of topological  groupoids such that} $\phi \xi
^\prime = \xi $  {\it and} $\xi ^\prime a = i\xi a$ {\it for} $a
\in \xi ^{-1}(W)${\it .}
\end{theorem}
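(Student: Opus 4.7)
The plan is to realize $H$ as a groupoid of germs of continuous admissible local sections that generate $G$. Concretely, I would first collect $\Gamma^c(W)$, the set of continuous admissible local sections $s\colon D_s \to W$ of $(G,W)$, and then let $J$ be the subgroupoid of $\Gamma(G)$ consisting of all finite products $s_1 s_2 \cdots s_n$ with each $s_i \in \Gamma^c(W)$. The generating hypothesis (3) of Definition \ref{ltopgrpd} ensures that every element of $G$ arises as $s(x)$ for some $s \in J$ and $x \in D_s$, so $J$ captures $G$ fully while remembering how its elements are pieced together from local continuous data.

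Next I would form $H$ as the set of equivalence classes of pairs $(s, x)$ with $s \in J$ and $x \in D_s$, under $(s, x) \sim (t, x)$ meaning that $s$ and $t$ agree on some open neighborhood of $x$ in $D_s \cap D_t$. The evaluation $\phi\colon H \to G$, $\phi[(s, x)] = s(x)$, is then forced to be a groupoid morphism because composition and inversion in $\Gamma(G)$ act pointwise on germs. The embedding $i\colon W \to H$ sends $w \in W$ to the germ at $\alpha(w)$ of a continuous admissible local section of $W$ through $w$, which exists by condition (5) of Definition \ref{ltopgrpd}.

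For the topology on $H$, I would take as a basis the sets $\widetilde{s} := \{[(s, y)] : y \in D_s\}$ indexed by $s \in J$, with each $\widetilde{s}$ homeomorphic to $D_s$ via the source projection. Condition (4) of Definition \ref{ltopgrpd}, together with the groupoid-theoretic identities for admissible local sections, ensures that this family is closed under pairwise intersection up to refinement, that composition and inversion transport basic opens to basic opens, and that $\phi$ restricted to $\phi^{-1}(W)$ is continuous while $i$ is an embedding onto an open neighborhood of $\Ob(H)$. At this stage, parts (i) of the theorem are all immediate from the construction.

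For the universal property in (ii), given $\xi\colon A \to G$ satisfying (a)--(c), for each $a \in A$ I would choose a continuous admissible local section $\sigma$ of $A$ through $a$ with $\sigma(D_\sigma) \subseteq \xi^{-1}(W)$, then set $\xi'(a) := [(\xi \circ \sigma,\, \alpha_A(a))]$. Uniqueness is forced by the requirement $\xi' a = i \xi a$ for $a \in \xi^{-1}(W)$, which propagates to all of $A$ because $\xi^{-1}(W)$ generates $A$. The main technical obstacle is showing that $\xi'$ is well defined independently of $\sigma$ and globally continuous; both reduce to the germ equivalence in $H$ absorbing any overlap between two admissible choices of section, combined with the local sectionability of $A$, which lets continuity be tested on a neighborhood of each point.
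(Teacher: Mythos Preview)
Your construction omits a crucial step that the paper (following Aof--Brown) carries out: the quotient by the normal subgroupoid $J_0 = J^c(W)\cap \ker\psi$, where $\psi$ is the evaluation $[s]_x\mapsto s(x)$. Your $H$ is the germ groupoid $J^c(G,W)$ itself, and with only the germ equivalence the map $i\colon W\to H$ is \emph{not} well defined: two continuous admissible local sections $s,t\in\Gamma^c(W)$ with $s(\alpha w)=t(\alpha w)=w$ need not agree on any neighbourhood of $\alpha w$, so their germs $[s]_{\alpha w}$ and $[t]_{\alpha w}$ are in general distinct. It is precisely the passage to $H=J^c(G,W)/J_0$ that forces $\langle s\rangle_{\alpha w}=\langle t\rangle_{\alpha w}$ in this situation and makes both $i$ and the charts $\sigma_s$ independent of the choice of section through $w$.

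This omission also breaks your topology. Declaring the sets $\widetilde{s}=\{[(s,y)]:y\in D_s\}$ to be basic opens, each homeomorphic to $D_s\subseteq X$, gives the sheaf (\'etale) topology in which $\alpha\colon H\to X$ is a local homeomorphism; but then $i\colon W\to H$ cannot be an open embedding unless $\alpha|_W\colon W\to X$ is already a local homeomorphism, which is not part of the hypotheses. The paper instead topologises the \emph{quotient} via charts $\sigma_s\colon W\to H$, $w\mapsto \langle f\rangle_{\alpha w}\langle s\rangle_{\beta w}$, so that $H$ is locally modelled on $W$ rather than on $X$. Finally, your definition of $\xi'(a)$ via a single section $\sigma$ with image in $\xi^{-1}(W)$ only makes sense for $a\in\xi^{-1}(W)$; for general $a$ you must factor $a$ as a composite of such elements (using that $\xi^{-1}(W)$ generates $A$) and check compatibility, which again hinges on the $J_0$-quotient to make the result independent of the chosen factorisation.
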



\begin{proof}
Let $\Gamma (G)$ be the set of all admissible local sections of
$G $.  Define a product on $\Gamma (G)$ by \[(st)x =(sx) (t\beta
sx)\] for two  admissible local sections $s$ and $t$. If $s$
is an admissible local  section then write $s^{-1}$ for the
admissible local section $\beta  s D_s \rightarrow G,
\beta sx \mapstochar \rightarrow (sx)^{-1}$.  With this product
$\Gamma (G)$ becomes an inverse semigroup. Let  $\Gamma ^{c}(W)$
be the subset of $\Gamma (G)$ consisting of admissible  local
sections which have values in $W$ and are continuous. Let  $\Gamma
^{c}(G, W)$ be the subsemigroup of $\Gamma (G)$ generated by
$\Gamma ^{c}(W)$. Then $\Gamma ^{c}(G, W)$ is again an inverse
semigroup.  Intuitively, it contains information on the
iteration of local procedures.  \par

Let $J(G)$ be the sheaf of germs of admissible local sections of
$G$.  Thus the elements of $J(G)$ are the equivalence classes of
pairs $(x,s)$  such that $s \in \Gamma (G), x \in { D_s}$,
and $(x,s)$ is equivalent to $(y,t)$ if and only if $x = y$ and
$s$ and $t$ agree on a neighbourhood of  $x$. The equivalence
class of $(x,s)$ is written $[s]_{x}$. The product  structure on
$\Gamma (G)$ induces a groupoid structure on $J(G)$ with $X$  as
the set of objects, and source and target point maps are $[s]_{x}
\mapstochar  \rightarrow x, [s]_{x} \mapstochar \rightarrow
\beta sx$ respectively. Let $J^{c}(G, W)$  be the subsheaf of $J(G)$ of germs
of elements of $\Gamma ^{c} (G,W)$.  Then $J^{c} (G, W)$ is
generated as a subgroupoid of $J(G)$ by the sheaf  $J^{c} (W)$
of germs of elements of $\Gamma ^{c} (W)$. Thus an element of
$J^{c} (G, W)$ is of the form \[[s]_{x} = [s_1]_{x_1} \ldots
[s_n]_{x{_n}}\]        where $s = s_1 \ldots s_n$ with
$[s_i]_{x_{i}} \in J^{c}(W), x_{i+1}  = \beta s_{i}x_{i} , i =
1,\ldots ,n$ and $x_1 = x \in { D_s}$. \par

Let $\psi : J(G) \rightarrow G$ be the map defined by
$\psi ([s]_x) = s(x) $, where $s$ is an admissible local
section.  Then $\psi (J^{c} (G, W)) = G $. Let $J_0 = J^{c} (W)
\cap \ker \psi $.  Then $J_0$ is a normal subgroupoid of $J^{c}
(G, W) $; the proof is  the same as in \cite[Lemma 2.2]{Ao-Br}
The holonomy groupoid  $H =  \Hol(G, W)$ is defined to be the
quotient groupoid $J^{c} (G, W)/J_0$.  Let $p\colon J^{c}(G, W) \rightarrow H$
be the quotient morphism and  let $p([s]_{x})$ be denoted by
$<s>_{x}$.  Since $J_{0} \subseteq \ker \psi $ there is a
surjective morphism  $\phi : H \rightarrow G$ such that $\phi p
= \psi $.\par

The topology on the holonomy groupoid $H$ such that $H$ with
this topology  is a topological groupoid is constructed as follows.  Let
$s \in \Gamma ^{c}(G, W)$. A partial function $\sigma _s :  W
\rightarrow H$ is defined as follows. The domain of $\sigma _s$
is  the set of $w \in W$ such that $\beta w \in {D_s}$. A
continuous  admissible local section $f$ through $w$ is chosen and
the value $\sigma _{s}w$ is defined to be \[\sigma _{s}w =<f>_{\alpha w}<s>_{\beta w} = <fs>_{\alpha w}.\] It is proven
that $\sigma _{s}w$ is independent of the choice of the local
section $f$ and that these $\sigma _{s}$ form a set of charts.
Then the initial topology with respect to the charts $\sigma
_{s}$ is imposed on  $H$. With this topology $H$ becomes a topological groupoid.  Again the proof is essentially the same as in
Aof-Brown~\cite{Ao-Br}.
\end{proof}

 From the construction of the holonomy groupoid the  following extendibility condition is obtained.

\begin{proposition} \label{Extendibility} {\it The locally} topological {\it groupoid}  $(G,W)$ {\it
is extendible to} a topological {\it groupoid structure on}  $G$ {\it if
and only if the following condition holds:}  \par \noindent
{\bf (1)}: {\it if} $x \in \Ob(G)$ {\it , and} $s$ {\it is a
product} $s_{1}  \ldots s_n$ {\it of local sections about} $x$
{\it such that each} $s_i$  {\it lies in} $\Gamma ^{c}(W)$ {\it
and} $s(x) = 1_x${\it , then there is a  restriction} $s^\prime
$ {\it of} $s$ {\it to a neighbourhood of}  $x$ {\it such that}
$s^\prime $ {\it has image in} $W$ {\it and is} continuous, {\it
i.e.} $s^\prime \in \Gamma ^{c}(W)$.\label{locex}
\end{proposition}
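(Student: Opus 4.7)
The plan is to prove both implications using the holonomy groupoid $H=\Hol(G,W)$ and the morphism $\phi\colon H\rightarrow G$ provided by Theorem \ref{TheoremHolonomy}. The key reformulation is that $(G,W)$ is extendible if and only if $\phi$ is a bijection (and hence an isomorphism of groupoids), in which case the topology of $H$ transports to $G$.

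For the necessity direction, I would suppose that $G$ carries a topological groupoid structure in which $W$ sits as an open subspace. Given $s=s_1\ldots s_n$ with each $s_i\in\Gamma^c(W)$ and $s(x)=1_x$, the product of the $s_i$ is defined on some open neighbourhood of $x$ and is continuous as a map into $G$ by continuity of the partial composition in the topological groupoid $G$. Since $s(x)=1_x\in\Ob(G)\subseteq W$ and $W$ is open in $G$, the preimage $s^{-1}(W)$ is an open neighbourhood of $x$; restricting $s$ to this neighbourhood yields the required $s'\in\Gamma^c(W)$.

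For the sufficiency direction, assume condition \textbf{(1)} and I would prove that $\phi\colon H\rightarrow G$ is injective. Recall that $H=J^c(G,W)/J_0$ where $J_0=J^c(W)\cap\ker\psi$ and $\phi(\langle s\rangle_x)=s(x)$. Take $\langle s\rangle_x,\langle t\rangle_x\in H$ with $\phi(\langle s\rangle_x)=\phi(\langle t\rangle_x)$, so $s(x)=t(x)$. Consider the element $r=t^{-1}s$ in $\Gamma^c(G,W)$: it is a product of members of $\Gamma^c(W)$, and $r(x)=(t(x))^{-1}s(x)=1_x$. By condition \textbf{(1)} there is a restriction $r'$ of $r$ to a neighbourhood of $x$ with $r'\in\Gamma^c(W)$, so $[r]_x=[r']_x\in J^c(W)\cap\ker\psi=J_0$, which forces $\langle s\rangle_x=\langle t\rangle_x$ in $H$. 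Combined with the fact that $\phi$ is surjective (since $W$ generates $G$ and $\phi$ restricted to $\phi^{-1}(W)$ hits all of $W$ via $\phi i=\mathrm{id}_W$), this makes $\phi$ an isomorphism of groupoids.

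Transferring the topology of $H$ along $\phi^{-1}$ then furnishes $G$ with a topological groupoid structure. To verify that this extends the given data on $W$, I would use that the embedding $i\colon W\rightarrow H$ from Theorem \ref{TheoremHolonomy} identifies $W$ homeomorphically with an open neighbourhood of $\Ob(H)$ in $H$ and satisfies $\phi i=\mathrm{id}_W$; hence $\phi^{-1}$ restricts on $W\subseteq G$ to $i$, so $W$ is open in $G$ and retains its original topology. The subtle step, and the one I would be most careful with, is this injectivity argument for $\phi$: one must check that the germ equivalence truly lets condition \textbf{(1)} be applied to $r=t^{-1}s$ (which is an inverse-semigroup product of local sections, not necessarily a single element of $\Gamma^c(W)$ a priori), and that the resulting restricted section is indeed a witness that $[r]_x\in J^c(W)$ in the sheaf sense, so that $\langle r\rangle_x$ is the identity in $H$.
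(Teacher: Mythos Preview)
The paper does not actually supply a proof of this proposition; it is stated immediately after the sketch of the holonomy construction (Theorem~\ref{TheoremHolonomy}) as a direct consequence thereof, the result being taken from Aof--Brown~\cite{Ao-Br}. Your argument is exactly the standard one implicit in that construction: extendibility of $(G,W)$ is equivalent to the canonical morphism $\phi\colon \Hol(G,W)\rightarrow G$ being a bijection, and condition~(1) is precisely the statement that $\ker\phi$ is trivial. Both directions are correct in outline.

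One small correction in the sufficiency direction: with the inverse-semigroup product $(st)(x)=s(x)\circ t(\beta s(x))$ used in the paper, the section $r=t^{-1}s$ is defined at $\beta t(x)$ rather than at $x$, so your formula $r(x)=(t(x))^{-1}s(x)$ is off by this base-point shift. The clean choice is $r=st^{-1}$: then $x\in D_r$ and $r(x)=s(x)\circ (t(x))^{-1}=1_x$, so condition~(1) applies at $x$ and yields $[st^{-1}]_x=[r']_x\in J^c(W)\cap\ker\psi=J_0$, whence $\langle s\rangle_x=\langle t\rangle_x$. With that adjustment your proof is complete.
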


To prove that $\M(G,W)$ is a topological group-groupoid, we first prove a  more general result.
\begin{theorem}\label{isohol}
Let $G$ be a topological group-groupoid and $W $  an open subset of $G$ such that
\begin{enumerate}
\item $\Ob(G) \subseteq W$
\item $W=W^{-1}$
\item $W$ generates $G$ and
\item $(\alpha_W,\beta_W,W)$ has enough continuous admissible local sections.
\end{enumerate}
Let $p\colon M\rightarrow G$ be a morphism of group-groupoids such that $\Ob(p)\colon \Ob(M)\rightarrow \Ob(G)$ is identity and assume that  $\imath\colon W\rightarrow M$ is an inclusion   such that $p\imath=i\colon W\rightarrow G$ and $W'=\imath(W)$ generates $M$.

Then $M$ admits the structure of a topological group-groupoid such that $p\colon M\rightarrow G$ is a morphism of topological group-groupoids and maps $W'$ to $W$ homeomorphically.
\end{theorem}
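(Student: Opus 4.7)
The plan is to topologize $M$ by applying the Aof--Brown globalization theorem (Theorem~\ref{TheoremHolonomy}) to the pair $(M,W')$, and then to check that the resulting topology is compatible with the group multiplication. As a preliminary, I would transfer the subspace topology from $W\subseteq G$ to $W'$ via $\imath$ (well-defined since $p\imath=i$ is injective, forcing $\imath$ to be a bijection onto $W'$), so that $p|_{W'}\colon W'\to W$ becomes a homeomorphism.

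Next, I would verify that $(M,W')$ satisfies the axioms of a locally topological groupoid in Definition~\ref{ltopgrpd}. The inclusion $\Ob(M)\subseteq W'$ and the identity $W'=(W')^{-1}$ follow from the corresponding facts for $(G,W)$, using that $\imath$ preserves identities and inverses as a restriction of a groupoid morphism. Generation of $M$ by $W'$ is given by hypothesis. Continuity of the difference map on $(W'\times_\alpha W')\cap\delta^{-1}(W')$ and the existence of enough continuous admissible local sections transfer from $(G,W)$ via the homeomorphism $p|_{W'}$ together with the compatibility $p\circ\delta_M=\delta_G\circ(p\times p)$.

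Applying Theorem~\ref{TheoremHolonomy} then produces a topological groupoid $H=\Hol(M,W')$ with a groupoid morphism $\phi\colon H\to M$ that is the identity on objects, and an embedding of $W'$ onto an open neighborhood of $\Ob(H)$. To transport the topology of $H$ back to $M$ I would show that $\phi$ is a bijection, which by Proposition~\ref{Extendibility} reduces to the extendibility condition: any product $s=s_1\cdots s_n$ of continuous admissible local sections in $\Gamma^c(W')$ with $s(x)=1_x$ admits a restriction $s'\in\Gamma^c(W')$. My plan is to push such a product through $p$ to obtain $ps=ps_1\cdots ps_n$ in $\Gamma^c(W)$ with $(ps)(x)=1_x$ in $G$; since $G$ is already a topological group-groupoid it is extendible, giving a restriction $(ps)|_U\in\Gamma^c(W)$, which then lifts back through the homeomorphism $p|_{W'}$ to the required restriction of $s$.

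Once $M$ carries this topology, continuity of $p\colon M\to G$ follows because $p$ is continuous on the generating open neighborhood $W'$ and is a groupoid morphism. Finally, for the group multiplication $M\times M\to M$, continuity on $W'\times W'$ is inherited from the continuous multiplication on $W\times W$ in $G$ via the homeomorphism $p|_{W'}$; to extend continuity globally I would cover neighborhoods of arbitrary $(a,b)\in M\times M$ by products of continuous admissible local sections around $a$ and $b$, using the interchange rule to combine them. The main obstacle I anticipate is the extendibility step: ensuring that the lifted restriction actually takes values in $W'$ rather than merely in $p^{-1}(W)$, which will require shrinking $U$ carefully so that the iterated composites of the $s_i$ remain inside $W'$.
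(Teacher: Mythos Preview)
Your proposal follows essentially the same route as the paper: topologize $M$ by showing $(M,W')$ is a locally topological groupoid, verify extendibility so that $\Hol(M,W')\cong M$, and then check continuity of the group operation by reducing to $W'$. The paper offloads the first two steps to \cite[Corollary~5.6]{Br-Mu2}, whereas you spell them out; your extendibility argument (pushing the product of sections through $p$ and lifting back via the homeomorphism $p|_{W'}$) is exactly the content of that reference, and the obstacle you anticipate---that a priori $s|_U$ lands in $p^{-1}(W)$ rather than $W'$---is the substantive point handled there.

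For the continuity of the group multiplication, the paper argues with the chart basis directly: since every basic open in $M$ is the image under some $\sigma_s$ of an open $U'\subseteq W'$, it suffices to check continuity of the restriction $m_{W'}\colon m^{-1}(W')\cap(W'\times W')\to W'$, which holds because it transports from the continuous group difference on $G$. Your plan to cover $M\times M$ by products of local-section charts and invoke the interchange rule amounts to the same reduction, made explicit on the domain side rather than the target side; either works, and your version arguably makes the passage from local to global continuity more transparent than the paper's terse diagram.
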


\begin{proof}  As it was proved in \cite[Corollary 5.6]{Br-Mu2}, $(M,W')$ is a locally topological groupoid and by Proposition \ref{Extendibility} it is extendible, i.e., the holonomy groupoid  $H=\Hol(M,W')$ is isomorphic to $M$. Hence by Theorem \ref{TheoremHolonomy},  $M$ becomes a topological groupoid  such that  $M$ has the  chart topology from  $W'$. Hence the chart open subsets of $M$ form a base for this topology.  We now prove that the difference map of product   $m\colon M\times M\rightarrow M, (a,b)\rightarrow ab^{-1}$  is continuous. We now consider the following diagram.

\begin{center}
\begin{tikzpicture}
\matrix(a)[matrix of math nodes, row sep=3em, column sep=3.5em, text height=1.5ex, text depth=0.25ex]
{ m^{-1}(W')\cap (W'\times W')&W'\\ M\times M&M\\};
\path[->,font=\scriptsize](a-1-1) edge node[above]{$m_{W'}$} (a-1-2);
\path[->,font=\scriptsize](a-1-1) edge node[left]{$$} (a-2-1);
\path[->,font=\scriptsize](a-1-2) edge node[right]{$$} (a-2-2);
\path[->,font=\scriptsize](a-2-1) edge node[above]{$m$} (a-2-2);
\end{tikzpicture}
\end{center}

Let $U$ be a base open subset, i.e., a chart open subset of $M$ and $U'$ be the open subset of $W'$, which is homeomorphic to $U$. Since the restriction
$m_{W'}\colon m^{-1}(W')\cap (W'\times W')\rightarrow W'$ is continuous,  the inverse image $(m_{W'})^{-1}(U')$ is open in  $m^{-1}(W')\cap (W'\times W')$ and it is homeomorphic  to an  open neighbourhood of $M\times M$. That means the inverse image $m^{-1}(U)$ is open in $M\times M$ and hence $m$ is continuous.

Since the locally topological groupoid $(M,W')$ is extendible the holonomy groupoid  $H=\Hol(M,W')$  is isomorphic   to $M$ and hence by Theorem \ref{TheoremHolonomy},  $p\colon M\rightarrow G$ becomes a morphism of topological groupoids. Further by assumption it is a morphism of group-groupoids. Hence $p\colon M\rightarrow G$ becomes a morphism of topological group-groupoids.
\end{proof}

As a result of Theorems \ref{isomorp} and \ref{isohol} we can state the following Corollary.
\begin{corollary}\label{Mongpgpdintgpd}
Let   $G$  be  a  topological group-groupoid such that  each star $G_x$ has a universal cover. Suppose that $W$ is a star path connected neighborhood of  $\Ob(G)$  in $G$ satisfying the conditions in  Theorem \ref{isomorp} and Theorem \ref{isohol}. Then the monodromy groupoid  $\Mon (G)$ is a topological group-groupoid such that the projection $p\colon \Mon(G)\rightarrow G$ is a morphism of topological group-groupoids.
\end{corollary}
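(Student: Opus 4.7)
The plan is to reduce the statement to a direct application of Theorem \ref{isohol}, taking $M := \Mon(G)$ and $p\colon \Mon(G) \to G$ to be the canonical projection which sends a homotopy class of paths in $G_x$ starting at $\epsilon(x)$ to its endpoint. By Theorem \ref{Mongpgpd} the group-groupoid structure on $\Mon(G)$ is already in place, and $p$ is a morphism of group-groupoids which is the identity on objects, so the algebraic hypotheses on $M$ and $p$ required by Theorem \ref{isohol} are immediately satisfied.

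The first substantive step is to exhibit an inclusion $\imath\colon W \hookrightarrow \Mon(G)$ with $p\imath = i$ and with $W' := \imath(W)$ generating $\Mon(G)$. Since $W$ is star path connected, each $w \in W_x$ is joined to $\epsilon(x)$ by a path inside $W_x$; the liftability of $V_x$ assumed in Theorem \ref{isomorp} forces the homotopy class of any such path in $G_x$ to be independent of the chosen path, producing a well-defined element $\imath(w) \in (\pi(G_x))_{\epsilon(x)} = \Mon(G)_x$ with $p\imath(w) = w$. Transporting $\imath$ across the isomorphism $\M(G,W) \cong \Mon(G)$ furnished by Theorem \ref{isomorp} identifies it with the tautological inclusion of $W$ into $\M(G,W)$; in particular $W'$ generates $\Mon(G)$ because it generates $\M(G,W)$ by construction.

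With $W$, $\imath$, $W'$ and $p$ all meeting the hypotheses of Theorem \ref{isohol}, that theorem then endows $\Mon(G)$ with a topological group-groupoid structure and simultaneously makes $p$ a morphism of topological group-groupoids that carries $W'$ homeomorphically onto $W$. The step I expect to be the main obstacle is the compatibility of the chart topology produced by the holonomy construction inside Theorem \ref{isohol} with the star topology on $\Mon(G)$ supplied by Theorem \ref{Mongpgpd}; however, this is precisely what the isomorphism of Theorem \ref{isomorp} together with Corollary \ref{isomongrp} is arranged to guarantee, since both topologies agree on the generating open set $W'$, where $p$ is a homeomorphism, and the holonomy construction propagates this agreement throughout $\Mon(G)$.
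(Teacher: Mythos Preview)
Your argument is correct and rests on the same two ingredients as the paper---Theorem~\ref{isomorp} and Theorem~\ref{isohol}---but you deploy them in the opposite order. The paper applies Theorem~\ref{isohol} with $M=\M(G,W)$, where the inclusion $\imath\colon W\to \M(G,W)$ and the fact that $W'$ generates $M$ are built into the very construction of $\M(G,W)$; it then transports the resulting topological group-groupoid structure across the isomorphism $\M(G,W)\cong\Mon(G)$ of Theorem~\ref{isomorp}. You instead take $M=\Mon(G)$ directly, which obliges you to construct $\imath$ by hand via path-lifting and to invoke Theorem~\ref{isomorp} only to certify that $W'$ generates $\Mon(G)$. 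Both routes arrive at the same place; the paper's is slightly more economical because the hypotheses of Theorem~\ref{isohol} are immediate for $\M(G,W)$, while yours has the mild advantage of never leaving $\Mon(G)$ and makes explicit the compatibility of the chart topology with the star topology, a point the paper leaves implicit.
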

\begin{proof} By  Theorem \ref{isomorp}, $\M(G,W)$ and $\Mon(G)$ are isomorphic as star topological groupoids. By Theorem \ref{isohol},  $\M(G,W)$ is a topological group-groupoid and so also is  $\Mon(G)$ as required.
\end{proof}

As a result of Corollaries \ref{weakmonodromyprinciple} and \ref{Mongpgpdintgpd} we can give the following theorem which we call as  {\em strong monodromy principle} for topological group-groupoids.
\begin{theorem}\label{strong} {\em  ( Strong Monodromy Principle) }
Let  $G$   be a star connected and star simply connected topological group-groupoid and let $W$ be an open and star connected subgroup of  $G$ satisfying the conditions of Theorem \ref{isomorp}  and  Theorem \ref{isohol}. Let  $H$  be a topological group-groupoid  and let $f \colon W\rightarrow H$  be a local morphism  of topological group-groupoids which is the identity on $\Ob(G)$. Then  $f$   extends uniquely to a morphism  $\tilde{f} \colon G\rightarrow H $ of topological group-groupoids.
\end{theorem}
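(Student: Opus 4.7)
The plan is to combine the two principal tools established earlier: the weak monodromy principle (Corollary \ref{weakmonodromyprinciple}), which already provides the extension together with its continuity on stars, and the topological enrichment of the monodromy groupoid given by Corollary \ref{Mongpgpdintgpd}, which promotes $\Mon(G)$ from a star topological group-groupoid to a genuine topological group-groupoid under the stronger hypotheses on $W$.

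First I would apply Corollary \ref{weakmonodromyprinciple} to the local morphism $f\colon W\to H$. Since $G$ is star connected and star simply connected and $W$ is an open star connected subgroup containing $\Ob(G)$ satisfying the conditions of Theorem \ref{isomorp}, this yields a unique morphism $\tilde{f}\colon G\to H$ of star topological group-groupoids that restricts to $f$ on $W$ and is the identity on $\Ob(G)$. This one application settles in advance all of the algebraic content of the theorem (compatibility with the groupoid composition, the group multiplication, the units and the inversions, together with uniqueness) and delivers continuity of $\tilde{f}$ on each star $G_x$.

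Next I would upgrade the continuity of $\tilde{f}$ from stars to the whole of $G$. Because $W$ also satisfies the hypotheses of Theorem \ref{isohol}, Corollary \ref{Mongpgpdintgpd} puts a topological group-groupoid structure on $\Mon(G)$ with the projection $p\colon \Mon(G)\to G$ a morphism of topological group-groupoids; and by the argument used in the proof of Corollary \ref{weakmonodromyprinciple}, the hypothesis that $G$ is star connected and star simply connected makes $p$ an isomorphism of star topological groupoids. A base for the topology of $\Mon(G)$ is provided by the charts $\sigma_s$ constructed in Theorem \ref{TheoremHolonomy}, each built from a product of continuous admissible local sections with values in $W$. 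Since $\tilde{f}|_W=f$ is continuous as a local morphism of topological group-groupoids and $\tilde{f}$ is a morphism of groupoids, the composition $\tilde{f}\circ\sigma_s$ is expressible in terms of products in $H$ of values of $f$ on continuous local sections, hence is continuous. This gives global continuity of $\tilde{f}$, which combined with its algebraic properties from the first step completes the proof.

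The main obstacle is the second step: one must verify that the topology on $G$ inherited from the isomorphism $\Mon(G)\cong G$ (under the present hypotheses, a homeomorphism and not merely an isomorphism of star topological groupoids) really is generated by the chart base coming from $W$, so that continuity of the globalization can be tested on $W$. Once this identification of topologies is in hand, the continuity of $\tilde{f}$ globalizes from continuity on $W$ in exactly the same manner as in the proof of Theorem \ref{weak}, and the conclusion follows.
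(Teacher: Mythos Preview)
Your proposal is correct and follows essentially the same route as the paper: apply Corollary~\ref{weakmonodromyprinciple} to obtain the unique extension $\tilde{f}$ as a morphism of star topological group-groupoids, then invoke Corollary~\ref{Mongpgpdintgpd} to identify $G\cong\Mon(G)\cong\M(G,W)$ as topological group-groupoids, and finally deduce global continuity from continuity of $\tilde{f}$ on the open generating set $W'$. The paper's proof is terser---it simply asserts that continuity on $W'$ suffices because $W'$ generates $\M(G,W)$---whereas you spell out the mechanism via the chart maps $\sigma_s$ of Theorem~\ref{TheoremHolonomy}; your concern about matching the holonomy topology with the original topology on $G$ is the only place where you add genuine content beyond the paper, and it is a legitimate point that the paper leaves implicit.
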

\begin{proof}

By Corollary \ref{weakmonodromyprinciple}, the local morphism $f \colon W\rightarrow H$ extends to $\tilde{f}\colon G\rightarrow H$; and  by Corollary\ref{Mongpgpdintgpd} $\Mon(G)$ and $\M(G,W)$ are isomorphic as topological group-groupoids.  The continuity of $\tilde{f}$ follows from the fact that $\tilde{f}$ is continuous on an open subset  $W'$ which generates $\M(G,W)$. \end{proof}

\section*{Acknowledgment} We would like to thank the referee for bringing the paper \cite{Icen-Gursy} to our attention for topological group-groupoids

\end{document}